\newlength{\hchng}
\newlength{\vchng}
\newtheorem{thm}{Theorem}[section]
\newtheorem{lemma}[thm]{Lemma}
\newtheorem{preremark}[thm]{Remark}
\newenvironment{remark}{\begin{preremark}\rm}{\medskip \end{preremark}}
\numberwithin{equation}{section}
\newcommand{\norm}[1]{\left\Vert#1\right\Vert}
\newcommand{\R}{\mathbb R}
\newcommand{\grad} {\nabla}
\newcommand{\lap} {\triangle}
\newcommand{\dd} {\; \mathrm{d}}
\DeclareMathOperator*{\osc}{osc}
\DeclareMathOperator{\tr}{tr}
\title{Upper bounds for multiphase composites in any dimension}
\author{Luis Silvestre}
\begin{document}
\maketitle

\begin{abstract}
We prove a rigorous upper bound for the effective conductivity of an isotropic composite made of several isotropic components in any dimension. This upper bound coincides with the Hashin Shtrikman bound when the volume ratio of all phases but any two vanish.
\end{abstract}

\section{Introduction}
The effective conductivity of a composite material made of two isotropic phases with given conductivities and volume ratios is bounded below and above by the Hashin-Shtrikman bounds \cite{hashin1962variational}. These bounds are exact. For any value of the parameters there is some micro structure that realizes the bounds. The Hashin-Shtrikman bounds can be extended to composites of more than two phases, but they are not exact for some values of the parameters. As it was pointed out by Milton \cite{milton1981concerning}, the Hashin-Shtrikman upper bound for three phase composites for which the most composite phase has volume ratio zero, does not coincide with the bound for the two other phases only. This is highly counterintuitive, and strongly suggest that there should be better bounds for some values of the parameters.

In two dimensions, the Hashin-Shtrikman bounds for multiphase composites were refined by Nesi \cite{MR1363001} and Astala and Nesi \cite{MR2020365}. Exact bounds for the two dimensional case were proposed by Cherkaev in \cite{cherkaev2009bounds}. In more than two dimensions, there is no bound that refines the Hashin-Shtrikman ones. The purpose of this paper is to progress in that direction.

In this paper we prove a new upper bound for the effective conductivity of an isotropic composite with an arbitrary number of isotropic phases in arbitrary dimension that refines the Hashin-Shtrikman bound. The bound is not exact except when Hashin-Shtrikman bounds are exact. In fact the bound seems to be very rough, but it does satisfy the right asymptotics. If we let the volume ratio of all phases but two go to zero, the bound converges to the corresponding two-phase Hashin-Shtrikman, for any choice of the two phases to keep. This provides the first rigorous proof, in dimension larger than two, of the intuitively obvious fact that a phase with a negligible volume ratio has a negligible effect in the possible effective conductivities.

We consider periodic composite materials made from isotropic phases. The microscopic structure is given by a periodic function $\sigma$. Typically, for a composite with $K$ phases, the function $\sigma$ has the following structure.
\begin{equation} \label{e:a-coefficient}
\sigma(x) = \begin{cases}
\sigma_1 & \text{in } A_1 \\
\sigma_2 & \text{in } A_2 \\
\cdots \\
\sigma_K & \text{in } A_K 
\end{cases}
\end{equation}

The sets $A_1, \cdots, A_K$ are a partition of the unit cube $Q$ of $\R^n$ and $\sigma(x)$ is extended periodically outside $Q$. We denote by $\mu_i$ the measure of each set $A_i$. Obviously, since $\bigcup_i A_i = Q$, $\mu_1+\dots+\mu_K=1$.

The effective conductivity associated with $\sigma$ is computed via the cell problem:
\begin{equation} \label{e:cell-problem}
\langle A v , v \rangle = \min_{u \in H^1_p(Q)} \int_Q \sigma(x) |v+\grad u|^2 \dd x
\end{equation}
where $H^1_p(Q)$ stands for the $H^1$ functions in the unit cube $Q$ with periodic boundary conditions.

In this article we will concentrate in finding bounds for the trace of $A$ that depend on the values of $\mu_i$ and $\sigma_i$ but not on the particular structure $A_i$. In case the composite has cubic symmetry, then the effective conductivity is isotropic. In that case, our bounds are just bounds of the effective conductivity.

We define
\[ \bar \sigma = \frac 1 n \tr A = \frac 1 n \sum_{i=1,\dots, n} \langle A e_i, e_i \rangle. \]
Our purpose is to find upper and lower bounds for $\bar \sigma$.

Clearly, the multiphase composite problem is a particular case of the problem of finding bounds for $\bar \sigma$ when all that is known of the periodic coefficient $\sigma$ is its distribution function
\[ F(t) = |\{ x : \sigma(x) > t\}|.\]
Thus, in this work we will consider the problem of finding an upper bound of $\bar \sigma$ that depends on $F$ only. It is important to remember that $\inf \sigma$, $\sup \sigma$ and an integral of the form
\[ \int_Q G(\sigma(x)) \dd x, \]
for any function $G$, depend on the distribution of $\sigma$ only. 

The following theorem provides an upper bound that refines the Hashin-Shtrikman bounds.

\begin{thm} \label{t:upperbound}
For any value of the parameter $S>0$, the following upper bound holds:
\[ \bar \sigma \leq H+E \]
where 
\begin{align*}
H &= - (n-1)S + \left( \int_Q (\sigma(x)+(n-1)S)^{-1} \right)^{-1} \\
E &= \frac {C (\osc \sigma)^2 \left( \int_Q (\sigma(x)+(n-1)S)^{-1} \right)^{-2} }{(\inf \sigma + (n-1) S)^2(\sup \sigma + (n-1) S)^2}  \int_S^{\infty} F(t) (1- \log F(t))^2 \dd t
\end{align*}
where $C$ is a constant depending only on dimension.
\end{thm}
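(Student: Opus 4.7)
The plan is to construct an explicit test function in the variational formulation~(\ref{e:cell-problem}), motivated by the following observation. Write $\tilde\sigma(x) := \sigma(x) + (n-1)S$ and $c := \bigl(\int_Q \tilde\sigma^{-1}\bigr)^{-1}$, so that $H = c - (n-1)S$. For a fixed unit direction $e$, consider the vector field $V(x) := c\,e/\tilde\sigma(x)$, which satisfies $\int_Q V = e$. Using $\sigma = \tilde\sigma - (n-1)S$, a direct computation gives
\[
\int_Q \sigma |V|^2 = c^2 \int_Q \frac{\tilde\sigma - (n-1)S}{\tilde\sigma^2} = c - (n-1)S\,c^2 \int_Q \tilde\sigma^{-2} \leq c - (n-1)S = H,
\]
where the last inequality is the Cauchy--Schwarz bound $c^2 \int \tilde\sigma^{-2} \geq 1$. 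If $V$ happened to be of the form $e + \nabla u$ for some periodic $u$, this $u$ would be an admissible test function in~(\ref{e:cell-problem}) and we would immediately conclude $\bar\sigma \leq H$ with no error term.

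In general $V$ is not a gradient, so we Helmholtz-decompose $V = e + \nabla\psi + D$, with $\nabla\psi$ a mean-zero gradient and $D$ a mean-zero divergence-free field, and take $u := \psi$ as the test function. Then $e + \nabla u = V - D$ and
\[
\int_Q \sigma|e + \nabla u|^2 = \int_Q \sigma|V|^2 - 2 \int_Q \sigma V \cdot D + \int_Q \sigma|D|^2.
\]
The first term is already $\leq H$, so the task is to absorb the cross-term and the $|D|^2$-term into $E$. Summing over $e = e_1, \dots, e_n$ and dividing by $n$ then gives the stated bound $\bar\sigma \leq H + E$.

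To control the error, one uses the pointwise identity $|V - e| = |c - \tilde\sigma|/\tilde\sigma \leq \osc(\sigma)/(\inf\sigma + (n-1)S)$ and the $L^2$-continuity of the Hodge projection, which yields the crude estimate $\|D\|_{L^2} \lesssim \osc(\sigma)/(\inf\sigma + (n-1)S)$; this alone does not produce the integral $\int_S^\infty F(t)(1-\log F(t))^2\,dt$ appearing in $E$. The refined estimate must exploit the level-set structure of $V-e$: a layer-cake decomposition along the excursion sets $\{\sigma > t\}$ for $t>S$, together with a sharp endpoint bound for the Hodge projection at the critical $L^2$-endpoint (of Meyers/Orlicz type), is what produces the logarithmic weight $(1-\log F(t))^2$. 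The additional factor $(\sup\sigma + (n-1)S)^{-2}$ in $E$ should arise from rewriting $\sigma V = c\bigl(1 - (n-1)S/\tilde\sigma\bigr)e$ in the cross-term and exploiting the mean-zero property $\int D_e = 0$ to extract cancellation. Carrying out this distribution-function analysis --- converting the norm of the non-gradient residual $D$ into the sharp integral against $F$ with the critical logarithmic weight --- is the main technical obstacle of the proof.
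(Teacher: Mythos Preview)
Your approach is genuinely different from the paper's, and as written it has a structural gap that prevents it from yielding the stated bound.

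The paper does \emph{not} work direction by direction. It restricts the vector test field in \eqref{e:vector-integral} to $U=\nabla p$ for a scalar potential $p$, so the test matrix is $I+D^2p$. Using the periodic identity $\int_Q|D^2p|^2=\int_Q|\Delta p|^2$, the energy splits as $I_1(\Delta p)+I_2(D^2p)$, where $I_1$ depends only on $\Delta p$ and
\[
I_2(D^2p)=\frac{1}{n}\int_Q(\sigma(x)-S)\,\Bigl|D^2p-\tfrac{1}{n}(\Delta p)I\Bigr|^2\dd x.
\]
Choosing $p$ to minimize $I_1$ gives $I_1=H$ \emph{exactly}, and since the traceless Hessian squared is nonnegative, $I_2\le \frac1n\int_Q(\sigma-S)^+|D^2p-\tfrac1n\Delta p\,I|^2$. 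This weight $(\sigma-S)^+$ is supported on $\{\sigma>S\}$, which is precisely what produces, via Fubini, the integral $\int_S^\infty F(t)(\cdots)\dd t$. The inner integral over $\{\sigma>t\}$ is then controlled by the BMO estimate $\|D^2p-\tfrac1n\Delta p\,I\|_{BMO}\le C\osc\theta$ (Calder\'on--Zygmund, since $\Delta p=\theta$ is bounded) together with the John--Nirenberg inequality, which gives $\int_A|f|^2\le C\|f\|_{BMO}^2|A|(1-\log|A|)^2$ for any set $A$.

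Your Helmholtz construction misses the localization. A direct computation of your error term (using $\sigma V=ce-(n-1)SV$ and $\int V\cdot D=\|D\|_{L^2}^2$) gives, after summing over directions,
\[
\bar\sigma\;\le\;H\;+\;\frac{1}{n}\sum_{i}\Bigl(\int_Q\tilde\sigma\,|D_i|^2-(n-1)S\|\nabla\psi_i\|_{L^2}^2\Bigr),
\]
and the weight $\tilde\sigma=\sigma+(n-1)S$ on $|D|^2$ is positive \emph{everywhere}, not just on $\{\sigma>S\}$. In particular, take $S=\sup\sigma$: then $E=0$ in the theorem, yet your error term is strictly positive whenever $\sigma$ is nonconstant (for a laminate transverse to $e$, $\nabla\psi_e=0$ and $D_e=V_e-e\neq0$). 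So your test function does not even recover the Hashin--Shtrikman bound, let alone the refinement. Incidentally, your $\Psi=(\psi_1,\dots,\psi_n)$ is itself a gradient $\nabla q$ with $\Delta q=c/\tilde\sigma-1$, so you are secretly in the scalar-potential framework---but with $q=p/n$, which is not the minimizer of $I_1$.

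Finally, the relevant endpoint tool is BMO via Calder\'on--Zygmund and John--Nirenberg; ``Meyers/Orlicz'' estimates concern $L^p$ integrability for $p$ near $2$ and do not produce the $(1-\log F)^2$ factor.
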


We will not provide an explicit expression for the constant $C$ in Theorem \ref{t:upperbound}. It is a constant that depends on dimension only. It is related to the BMO estimates for the Laplace equation in the unit cube with a bounded right hand side. The best constant for these estimates is difficult to compute.

The proof of Theorem \ref{t:upperbound} uses the idea (originally from \cite{silvestre2007characterization}) that an upper bound can be obtained by reducing the set of possible vector fields in the cell problem to those that are the gradient of a scalar potential function. The term $E$ in the upper bound of Theorem \ref{t:upperbound} is obtained applying harmonic analysis estimates to this potential function. With this approach, upper bounds are more natural to obtain than lower bounds (unlike other approaches for bounds like the translation method). In this article we leave the question of a similar lower bound open. 

Note that for any $S \leq \sup \sigma$, 
\[ (n-1) S \leq \left( \int_Q (\sigma(x)+(n-1)S)^{-1} \right)^{-1} \leq (n-1) S + \sup \sigma. \]
Therefore we can simplify the expression of $E$ for the bound in theorem \ref{t:upperbound} to
\begin{equation} \label{e:Esimplified}
 E \leq \frac {C (\osc \sigma)^2}{(\inf \sigma + (n-1) S)^2}  \int_S^{\infty} F(t) (1- \log F(t))^2 \dd t
\end{equation}

Let us see what bounds Theorem \ref{t:upperbound} provides in the case of a three phase composite. In this case the function $\sigma$ has the form
\[ \sigma(x) = \begin{cases}
\sigma_1 & \text{in } A_1 \\ 
\sigma_2 & \text{in } A_2 \\ 
\sigma_3 & \text{in } A_3
\end{cases} \]
where $|A_1|=\mu_1$, $|A_2|=\mu_2$ and $|A_3|=\mu_3$. 

If we choose $S = \sigma_3$, then the term $E$ in Theorem \ref{t:upperbound} equals to zero, and we recover the classical Hashin-Shtrikman bound for three phase composites.

If we choose $S =\sigma_2$, then we obtain the following bound using the upper bound \eqref{e:Esimplified},
\begin{equation} \label{e:refined-three-phase}
 \bar \sigma \leq -(n-1)\sigma_2 + \left( \sum_{i=1,2,3} \mu_i (\sigma_i + (n-1)\sigma_2)^{-1} \right)^{-1} + \frac {C (\sigma_3-\sigma_1)^2(\sigma_3-\sigma_2) \mu_3 (1- \log \mu_3)^2 }{(\sigma_1 + (n-1) \sigma_2)^2}.
\end{equation}

This bound refines the Hashin-Shtrikman bound when the volume ratio $\mu_3$ is small. Indeed, as $\mu_3 \to 0$, \eqref{e:refined-three-phase} converges to the two phase Hashin-Shtrikman bound corresponding to the first and second phase only.

The organization of the paper is as follows. In section \ref{s:trivial} we prove the trivial upper bound that $\bar \sigma$ is bounded above by the average of $\sigma$. In section \ref{s:hashin-shtrickman} we provide a proof of the classical Hashin-Shtrikman bounds, and in section \ref{s:new-bounds} we prove the new bounds of Theorem \ref{t:upperbound}. Even though the bounds obtained in sections \ref{s:trivial} and \ref{s:hashin-shtrickman} are well known, we use these two sections to introduce the ideas that will be used to obtain the new bounds in section \ref{s:new-bounds}. These ideas originated in \cite{silvestre2007characterization}. The inequalities obtained in section \ref{s:hashin-shtrickman} are the basis of the proof of Theorem \ref{t:upperbound} in section \ref{s:new-bounds}.

\section{The trivial upper bound}
\label{s:trivial}
Let us analyze the formula for $\bar \sigma$.
\begin{align*}
\bar \sigma &= \frac 1 n \sum_{i=1,\dots, n} \langle A e_i, e_i \rangle \\
&= \frac 1 n\sum_{i=1,\dots, n} \min_{u_i \in H^1_p(Q)} \int_Q \sigma(x) |e_i + \grad u_i|^2 \dd x \\
&= \frac 1 n\min_{U \in H^1_p(Q,\R^n)} \int_Q \sigma(x) |I + DU|^2 \dd x
\end{align*}
were $H^1_p(Q,\R^n)$ stands for the space of periodic function is $Q$ with values in $\R^n$ and finite norm in $H^1$. Indeed $U = (u_1,\dots,u_n)$. Therefore, the quantity that we have to estimate is
\begin{equation} \label{e:vector-integral}
\bar \sigma = \frac 1n \min_{U \in H^1_p(Q,\R^n)} \int_Q \sigma(x) |I + DU|^2 \dd x
\end{equation}

One way to obtain an upper bound for \eqref{e:vector-integral} is by using a sample vector field $U$. The trivial upper bound is what we obtain by setting $U \equiv 0$.
\[ \bar \sigma \leq \int_{Q} \sigma(x) \dd x \]

In the case of a composite with $K$ phases, this trivial upper bound becomes $\sum_{i=1,\dots,K} \mu_i \sigma_i$.

This trivial upper bound is not achievable as soon as there is more than one different phase.

\section{The Hashin-Shtrikman upper bound}
\label{s:hashin-shtrickman}

We will start by showing a simple way to obtain the standard Hashin-Shtrikman bound for $\bar \sigma$. The idea of this proof was first given in \cite{silvestre2007characterization}. This idea was extended in \cite{liu2007hashin} where it was used to obtain the full Hashin-Shtrikman bounds for multiphase composites and study their attainability conditions.

The main idea is to obtain an upper bound for \eqref{e:vector-integral} by reducing the set where we look for a minimum to only those vector fields $U$ that are the gradient of a potential $p$. Therefore
\begin{equation} \label{e:scalar-integral}
 \bar \sigma \leq \frac 1 n \min_{p \in H^2_p(Q)} \int_Q \sigma(x) |I + D^2 p|^2 \dd x.
\end{equation}

The following elementary relations will be used. Note that the first identity plays a key role in the usual proof of the Hashin-Shtrikman bounds by the method of compensated compactness (\cite{luriecherkaev1984}, \cite{luriecherkaev1986} and \cite{tartar1985}. See also \cite{kohnmilton1986}).
\begin{itemize}
\item For any periodic function $p$ in $H^2_p$ we have
\begin{equation} \label{e:identity-hessian-laplacian}
\int_Q |D^2p|^2 \dd x = \int_Q |\lap p|^2 \dd x
\end{equation}

\item For every matrix $M$, the following inequality holds
\begin{equation} \label{e:matrix-inequality}
|M|^2 \geq \frac 1 n |\tr M|^2.
\end{equation}
In particular, for $M = D^2 p$, $|D^2p|^2 \geq \frac 1 n |\lap p|^2$. The equality holds only if $D^2 p$ is a scalar matrix.

\item For any periodic function $p$
\begin{equation} \label{e:lap-integral-zero}
\int_Q \lap p = 0
\end{equation}
Moreover, any function with zero average in the cube is the Laplacian of some potential $p$.
\end{itemize}

Let $S$ be an arbitrary constant. We apply these relations to \eqref{e:scalar-integral} and obtain
\begin{equation} \label{e:I1plusI2}
\begin{aligned}
\bar \sigma &=  \frac 1 n\min_{p \in H^2_p(Q)} \int_Q \sigma(x) |I + D^2 p|^2 \dd x \\
&= \min_{p \in H^2_p(Q)} I_1(\lap p) + I_2(D^2 p)
\end{aligned}
\end{equation}
where
\begin{align}
I_1(\lap p) &= \frac 1 n \int_Q \sigma(x) n + 2 \sigma(x) \lap p + S |\lap p|^2 + \frac 1 n (\sigma(x)-S) |\lap p|^2 \dd x, \label{e:I1} \\
I_2(D^2 p) &= \frac 1 n \int_Q (\sigma(x)-S) \left( |D^2 p|^2 - \frac 1n (\lap p)^2 \right) \dd x. \label{e:I2}
\end{align}

The advantage of these estimate is that \eqref{e:I1} depends only on $\lap p$. The value of $\lap p(x)$ can be chosen arbitrarily for every point $x$ with the only condition that it has average zero on $Q$.

In order to find an upper bound for $\bar \sigma$, we will choose the $p$ that minimizes $I_1(\lap p)$, and then we will estimate the remainder $I_2(D^2 p)$ using elliptic estimates for the obtained function $p$.

Since $\lap p$ has average zero on $Q$, we can rewrite $I_1$ as
\begin{align*}
I_1(\lap p) &=  \frac 1 n \int_Q \sigma(x) n + 2 (\sigma(x)+(n-1)S) \lap p + S |\lap p|^2 + \frac 1 n (\sigma(x)-S) |\lap p|^2 \dd x,\\
&= \int_Q \sigma(x) + (\sigma(x)+(n-1)S) \left( \left(1 + \frac {\lap p} n \right)^2 - 1 \right) \dd x, \\
&= - (n-1)S + \int_Q (\sigma(x)+(n-1)S)\left(1 + \frac {\lap p} n \right)^2 \dd x
\end{align*}

Now we will find the value of $\lap p$ that makes the value of $I_1$ smallest. We compute the first variation of $I_1$:
\[ DI_1 \cdot h = \int_Q (\sigma(x)+(n-1)S) \left(1 + \frac {\lap p} n \right) h(x) \dd x, \]
which has to be zero for any function $h$ with average zero on $Q$. This means that there is a constant $L$ such that
\[ (\sigma(x)+(n-1)S) \left(1 + \frac {\lap p} n \right) = L. \]
We can compute the exact value of $L$ using again that $\int_Q \lap p = 0$.
\begin{equation} \label{e:L}
 L = \left( \int_Q (\sigma(x)+(n-1)S)^{-1} \right)^{-1}.
\end{equation}
Now, for this optimal choice of $\lap p$, we have
\begin{align}
I_1(\lap p) &=  - (n-1)S + \int_Q L \left(1 + \frac {\lap p} n \right) \dd x \\
&= - (n-1)S + L = - (n-1)S + \left( \int_Q (\sigma(x)+(n-1)S)^{-1} \right)^{-1} \label{e:bound-of-I1}
\end{align}

Let us now concentrate on the value of $I_2(D^2 p)$ for this same choice of function $p$ that minimizes $I_1(\lap p)$.

We first observe that from \eqref{e:matrix-inequality}, $\left( |D^2 p|^2 - \frac 1n (\lap p)^2 \right) \geq 0$. Therefore, we can disregard the points $x$ where $\sigma(x) \leq S$:
\begin{equation} \label{e:I2-positivepart}
 I_2(D^2 p) \leq \frac 1 n \int_Q (\sigma(x)-S)^+ \left( |D^2 p|^2 - \frac 1n (\lap p)^2 \right) \dd x.
\end{equation}

In particular if we choose $S=\sup \sigma$ ($=\sigma_K$ in the case of a multi-phase composite) we make $I_2(D^2 p)=0$, we obtain that $\bar \sigma \leq I_1(\lap p)$ and we recover the Hashin-Shtrikman bound
\begin{equation} \label{e:hs}
 \bar \sigma \leq - (n-1) \sup \sigma + \left( \int_Q (\sigma(x)+(n-1) \sup \sigma)^{-1} \right)^{-1}
\end{equation}

The attainability condition is that the minimal vector field $U$ in \eqref{e:vector-integral} is the gradient of a scalar potential $p$ and also $D^2 p$ is a constant scalar matrix in all phases except the most conductive one. The range of parameters for which one can guarantee the attainability of the bounds \eqref{e:hs} is analyzed in \cite{liu2007hashin}. 

\section{New bounds}
\label{s:new-bounds}
In the optimal configuration for the Hashin-Shtrikman bounds, the most conductive phase plays the role of the matrix and all other phases are inclusions. The Hashin-Shtrikman bound cannot be optimal for multiphase composites in all range of parameters. Indeed, if we apply \eqref{e:hs} to a three phase composite and make the volume ratio of the most conductive phase $\mu_3$ approach zero, the bound \eqref{e:hs} still depends on $\sigma_3$ in the limit, even though the third phase is not present. When the most conductive phase has a very small volume ratio, it is more convenient to choose a different value of the arbitrary parameter $S$ and estimate \eqref{e:I2}.

In this section we prove theorem \ref{t:upperbound}. The proof refines the estimate from section \ref{s:hashin-shtrickman}. In particular it is based on the estimates \eqref{e:I1plusI2}, \eqref{e:I1} and \eqref{e:I2-positivepart}. In order to find an upper bound for $I_2(D^2 p)$ we will need the following technical Lemma about BMO functions.

\begin{lemma} \label{l:bmo}
Let $f: Q \to \R$ be a BMO function in $Q$ with average zero. Then for any subset $A \subset Q$,
\begin{equation} \label{e:BMO}
\int_{A} |f|^2 \dd x \leq C \norm{f}_{BMO}^2 (1-\log |A|)^2 |A|
\end{equation}
where $|A|$ stands for the measure of $A$ and $C$ depends only on dimension.
\end{lemma}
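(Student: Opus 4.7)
The plan is to derive the estimate from the John--Nirenberg inequality via the layer-cake formula. Since $f$ has average zero on $Q$ and $|Q|=1$, John--Nirenberg gives constants $c_1, c_2 > 0$, depending only on dimension, so that
\[
|\{x \in Q : |f(x)| > \lambda\}| \leq c_1 \exp\!\left(- c_2 \lambda / \norm{f}_{BMO}\right) \qquad \text{for all } \lambda > 0.
\]
Since $A \subset Q$, the super-level set inside $A$ satisfies the two-sided bound
\[
|\{x \in A : |f(x)| > \lambda\}| \leq \min\!\left( |A|,\; c_1 e^{-c_2 \lambda / \norm{f}_{BMO}} \right).
\]

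Next I would apply the layer-cake representation $\int_A |f|^2 \dd x = 2\int_0^\infty \lambda\, |\{x \in A : |f| > \lambda\}| \dd \lambda$ and split at the threshold $\lambda_0$ where the two bounds coincide, namely $\lambda_0 = (\norm{f}_{BMO}/c_2) \log(c_1 / |A|)$. On $[0,\lambda_0]$ I use the trivial bound $|A|$, giving a contribution of $\lambda_0^2 |A|$. On $[\lambda_0,\infty)$ I use the exponential bound, and an explicit integration by parts yields
\[
\int_{\lambda_0}^\infty 2 \lambda \, c_1 e^{-c_2 \lambda / \norm{f}_{BMO}} \dd \lambda \;=\; 2 (\norm{f}_{BMO}/c_2)(\lambda_0 + \norm{f}_{BMO}/c_2)\cdot c_1 e^{-c_2 \lambda_0/\norm{f}_{BMO}},
\]
and the last factor is just $|A|$ by the definition of $\lambda_0$.

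Adding the two pieces, and using that $\lambda_0 \leq (\norm{f}_{BMO}/c_2)(\log c_1 + 1 - \log |A|) \leq C \norm{f}_{BMO}(1 - \log|A|)$ (which holds because $|A| \leq |Q| = 1$), the sum is bounded by
\[
C\, \norm{f}_{BMO}^2 (1 - \log |A|)^2 |A|,
\]
which is exactly \eqref{e:BMO}. The only genuine input is the John--Nirenberg inequality; everything else is a routine rearrangement. The main (and really only) point to get right is the balancing of the two pieces at $\lambda_0$, and making sure the logarithmic factor absorbs the additive constants so that the estimate scales correctly as $|A| \to 0$.
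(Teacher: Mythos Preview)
Your proof is correct and follows essentially the same route as the paper: John--Nirenberg plus the layer-cake formula, with the integral split at a threshold comparable to $(\norm{f}_{BMO}/c_2)\log(1/|A|)$. The only cosmetic difference is that the paper splits at $T = -(\norm{f}_{BMO}/b)\log|A|$ rather than at the exact crossover point, but this leads to the same estimate.
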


Note that \eqref{e:BMO} also holds if $f$ is vector or matrix valued, since the inequality can be applied component-wise. We defer the proof of this Lemma to the appendix. We now prove the main theorem.

\begin{proof}[Proof of Theorem \ref{t:upperbound}]
As we mentioned above, we will find an upper bound for $\bar \sigma$ using \eqref{e:I1plusI2}. Our upper bound corresponds to one test function $p$, which may or may not be optimal for \eqref{e:I1plusI2}. In fact, we will choose the function $p$ which is optimal only for the value of $I_1(\lap p)$ and then we will estimate the value of $I_2(D^2 p)$ from above.

Note that 
\[ \left( |D^2 p|^2 - \frac 1 n |\lap p|^2 \right) = |D^2 p - \frac 1n (\tr D^2p) I|^2. \]
Thus, $I_2(D^2 p)$ is the integral of the squared norm of the traceless part of $D^2 p$.

Recall from section \ref{s:hashin-shtrickman}, that the potential $p$ that minimizes $I_1(\lap p)$ solves 
\begin{equation} \label{e:laplace-equation}
\lap p = n L (\sigma(x) + (n-1)S)^{-1} - n =: \theta(x)
\end{equation}

For this $p$, it is simple to estimate the $L^2$ norm of $D^2 p - \frac 1n \lap p I$ using the Fourier transform. Indeed
\[
\left(D^2 p - \frac 1n \lap p I\right)^\wedge (\xi) =  \left( -\frac{\xi \otimes \xi}{|\xi|^2} + \frac 1n I \right) |\xi|^2 \hat p(\xi)
\]
therefore 
\[ \norm{D^2 p - \frac 1n \lap p I}_{L^2} \leq \left(\frac{n-1}{n}\right)^{1/2} \norm{\theta}_{L^2} \]

An $L^2$ estimate for the integrand only provides a rough estimate for \eqref{e:I2-positivepart} that would only take into account the maximum value of $(\sigma(x)-S)$. On the other hand, $\theta$ is a bounded function. We can use Calderon-Zygmund theory to obtain that $D^2 p$ is a BMO function whose norm depends on the oscillation of $\theta$ only (see \cite{stein1993}).
\begin{equation} \label{e:BMO-estimate}
\norm{D^2 p - \frac 1n \lap p I}_{BMO(Q)} \leq C \osc_Q \theta
\end{equation}
where $C$ is a constant depending only on dimension.

We compute the oscillation of $\theta$ from \eqref{e:laplace-equation}:
\begin{equation} \label{e:osc-theta}
\begin{split}
\osc_Q \theta &= nL \left( \frac 1 {\inf \sigma + (n-1) S} - \frac 1 {\sup \sigma + (n-1)S} \right)  \\ 
 &= \frac {n }{(\inf \sigma + (n-1) S)(\sup \sigma + (n-1) S)} L \osc \sigma
\end{split}
\end{equation}
Recall that $L$ was given in \eqref{e:L}.

Combining  \eqref{e:osc-theta} with \eqref{e:BMO-estimate} and \eqref{e:BMO} we can obtain an upper bound for $I_2(D^2u)$ that depends on the distribution function $F$ of $\sigma$ only. From Fubini's theorem:
\begin{align*}
I_2(D^2 p) &\leq \frac 1 n \int_Q (\sigma(x)-S)^+ \left( |D^2 p|^2 - \frac 1n (\lap p)^2 \right) \dd x, \\
&\leq \frac 1 n \int_S^{+\infty} \int_{\sigma(x)>t} \left( |D^2 p|^2 - \frac 1n (\lap p)^2 \right) \dd x \dd t  \\
&\leq \frac 1 n \norm{D^2 p - \frac 1n \lap p I}_{BMO}^2 \int_S^{+\infty} F(t) (1- \log F(t))^2 \dd t  \\
&\leq \frac {C L^2 (\osc \sigma)^2 }{(\inf \sigma + (n-1) S)^2(\sup \sigma + (n-1) S)^2} \int_S^{+\infty} F(t) (1- \log F(t))^2 \dd t  \\
\end{align*}

Adding this bound for $I_2(D^2p)$ with the bound \eqref{e:bound-of-I1} for $I_1(\lap p)$ we finish the proof of Theorem \ref{t:upperbound}.
\end{proof}

\begin{remark}
The constant $C$ in \eqref{e:BMO-estimate} must be smaller for the traceless part of the Hessian: \\ $\norm{D^2 p - \frac 1n \lap p I}_{BMO(Q)}$ than for the whole Hessian $\norm{D^2 p}_{BMO(Q)}$. It is hard to know the best constants though.
\end{remark}

\begin{remark}
The obtained bounds for $I_2(D^2 p)$ seem to be very crude. In \eqref{e:BMO-estimate} we estimate the BMO norm of $D^2 p$ with respect to the $L^\infty$ bound of $\lap p$. An estimate in $L^\infty$ for $D^2 p$ is known to be false for the Laplace equation. On the other hand, as it is pointed out in \cite{cherkaev2009bounds}, if there exists a composite with piecewise differentiable interfaces where the bounds for \eqref{e:vector-integral} are realized, $|DU|$ should be bounded.
\end{remark}

\section{Appendix: The proof of Lemma \ref{l:bmo}}

In order to prove Lemma \ref{l:bmo}, we will use the following classical result by F. John and L. Niremberg about BMO functions (for a proof see \cite{MR0131498}).

\begin{lemma} \label{l:john-niremberg}
Let $f$ be a BMO function on $Q = [0,1]^n$ with average zero. Then the following inequality holds for the distribution of $f$:
\[ | \{|f|>\sigma\}| \leq B e^{\frac {-b \sigma}{\norm{f}_{BMO}} } \]
where $b$ and $B$ are constants depending only on the dimension $n$.
\end{lemma}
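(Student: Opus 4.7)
The plan is to deduce the $L^2$ bound on $A$ directly from the John--Nirenberg distributional inequality (Lemma \ref{l:john-niremberg}) by the standard layer-cake technique, splitting the integral at a threshold chosen so that the trivial bound and the exponential tail bound balance.

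Write $M = \norm{f}_{BMO}$ and represent
\[
\int_A |f|^2 \dd x = 2 \int_0^\infty s\, \bigl|\{x \in A : |f(x)| > s\}\bigr| \dd s.
\]
For the measure inside the integral I would use the two obvious bounds
\[
\bigl|\{x \in A : |f(x)| > s\}\bigr| \leq |A|
\qquad \text{and} \qquad
\bigl|\{x \in A : |f(x)| > s\}\bigr| \leq \bigl|\{x \in Q : |f(x)| > s\}\bigr| \leq B\,e^{-bs/M},
\]
the second one being Lemma \ref{l:john-niremberg}. These two bounds coincide at the threshold
\[
s_0 = \frac{M}{b}\log\!\frac{B}{|A|},
\]
which is the natural place to split the integration (assuming $|A| \leq B$; the opposite case is handled separately, see below).

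Splitting at $s_0$, the first piece is
\[
2 \int_0^{s_0} s\, |A| \dd s = s_0^2\, |A| = \frac{M^2}{b^2}\Bigl(\log\frac{B}{|A|}\Bigr)^2 |A|,
\]
while the second piece is
\[
2B \int_{s_0}^\infty s\, e^{-bs/M} \dd s,
\]
which I can evaluate explicitly (integration by parts or the change of variables $t = bs/M$) and it produces a term of order $M^2\, |A|\, \bigl(1 + \log(B/|A|)\bigr)$ by construction of $s_0$. Both pieces are therefore bounded by a constant (depending only on $B$, $b$, hence only on $n$) times $M^2\,(1 - \log |A|)^2 |A|$, since $|A| \leq |Q| = 1$ gives $1 - \log|A| \geq 1$ and $\log(B/|A|) \leq \log B + (1 - \log|A|)$.

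The only loose end is the regime where $|A|$ is not small (say $|A| \geq \min(1,B)$, which would make $s_0 \leq 0$). In that regime the desired inequality degenerates to an estimate of the form $\int_Q |f|^2 \lesssim M^2$, which follows from Lemma \ref{l:john-niremberg} by the same layer-cake argument applied with $A = Q$, since the tail $\int_0^\infty s\, e^{-bs/M} \dd s$ converges and produces a multiple of $M^2$. I do not expect any genuine difficulty here; the whole argument is a one-parameter optimization against the John--Nirenberg tail, and the main thing to get right is keeping track of which constants depend only on dimension (here $b$, $B$, and hence $C$) so that the final bound has the stated form.
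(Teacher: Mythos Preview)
Your proposal, as its own header indicates, is a proof of Lemma~\ref{l:bmo}, not of the John--Nirenberg inequality (Lemma~\ref{l:john-niremberg}) displayed above; the paper does not prove Lemma~\ref{l:john-niremberg} at all but simply cites \cite{MR0131498}. Read as a proof of Lemma~\ref{l:bmo}, your argument is correct and essentially identical to the paper's: both use the layer-cake representation, bound the level-set measure by $\min\bigl(|A|,\,B e^{-bt/\norm{f}_{BMO}}\bigr)$, split the integral at a threshold of order $\tfrac{\norm{f}_{BMO}}{b}\log(1/|A|)$, and collect terms; the only cosmetic difference is that the paper takes $T=-\tfrac{\norm{f}_{BMO}}{b}\log|A|$ while you take the actual crossing point $s_0=\tfrac{\norm{f}_{BMO}}{b}\log(B/|A|)$, which differs by an $n$-dependent constant and yields the same bound.
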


\begin{proof}[Proof of Lemma \ref{l:bmo}]
Let $T>0$ be a parameter which will be chosen later.

We use Lemma \ref{l:john-niremberg} to estimate the integral.
\begin{align*}
\int_{A} |f|^2 \dd x &= \int_0^\infty 2t | \{ |f|> t \} \cap A| \dd t \\
&\leq \int_0^\infty 2t \min\left( B e^{\frac {-b t}{\norm{f}_{BMO}} }, |A| \right) \dd t
\end{align*}
For any $T \in (0,+\infty)$, 
\begin{align*}
\int_{A} |f|^2 \dd x &\leq \int_0^T 2t |A| \dd t + \int_T^\infty 2t B e^{\frac {-b t}{\norm{f}_{BMO}} } \dd t \\
&= T^2 |A| + 2B\left(T + \frac{\norm{f}_{BMO}} b \right) \frac{\norm{f}_{BMO}} b e^{\frac {-b T}{\norm{f}_{BMO}} } \\
\intertext{Choosing now $T = - \frac{||f||_{BMO}}{b} \log |A|$,}
&\leq C ||f||_{BMO}^2 (1-\log |A|)^2 |A|
\end{align*}
for a constant $C$ depending only on dimension.
\end{proof}

\bibliographystyle{plain}
\bibliography{tpc}
\end{document}